\documentclass[12pt]{article}
\usepackage{graphicx,psfrag,epsf}
\usepackage{enumerate}
\usepackage{amsmath,amsthm,amsfonts,epsfig,amssymb,natbib,eucal,eufrak}

\usepackage{booktabs}
\usepackage{multirow}
\usepackage{siunitx}
\usepackage{qtree}

\usepackage{float}
\restylefloat{table}
\usepackage{color}

\usepackage{pdfpages}
\usepackage{wrapfig}

\usepackage{comment}

\usepackage{tikz}
\usepackage{csquotes}

\usepackage{hyperref}
\newcommand{\MRhref}[1]{\href{https://mathscinet.ams.org/mathscinet-getitem?mr=#1}{MR#1}}

\newtheorem{thm}{Theorem}
\newtheorem{claim}{Claim}

\newtheorem{lem}{Lemma}

\newtheorem{cor}{Corollary}
\newtheorem{pro}{Proposition}

\newcommand{\blind}{0}

\newcommand{\Pbb}{\mathbb P}
\newcommand{\Ebb}{\mathbb E}
\newcommand{\1}{\mathbf 1}

\newif\ifshowrefs
\showrefsfalse

\begin{document}

\def\spacingset#1{\renewcommand{\baselinestretch}%
{#1}\small\normalsize} \spacingset{1}


\if0\blind
{
  \title{\bf Distinct Extreme Scores in Random Round-Robin Tournaments}

  \author{
  Yaakov Malinovsky
    \thanks{email: yaakovm@umbc.edu}
   \\
    Department of Mathematics and Statistics\\ University of Maryland, Baltimore County, Baltimore, MD
    21250, USA\\
}
  \maketitle
} \fi

\if1\blind
{
  \bigskip
  \bigskip
  \bigskip
  \begin{center}
    {\LARGE\bf Title}
\end{center}
  \medskip
} \fi

\begin{abstract}
We consider a general round-robin tournament model with equally strong players, where $X_{ij}$ denotes the score of player $i$ against player $j$. We assume that $X_{ij}$ takes values in a countable subset of $[0,1]$ and satisfies $X_{ij}+X_{ji}=1$.
We prove that if $k(n)\to\infty$ as $n\to\infty$ and
$
\frac{k(n)^2\sqrt{\log(n/k(n))}}{\sqrt n}\to 0,
$
then, with probability tending to one, the largest $k(n)$ scores are all distinct. In particular, this holds whenever
$k(n)=o\!\Bigl(\bigl(n/\log n\bigr)^{1/4}\Bigr).$
By symmetry, the same conclusion also holds for the lowest $k(n)$ scores.
The obtained scale coincides with the one arising in classical problems on distinct extreme degrees in Erdős–Rényi random graphs, despite the fundamentally different dependence structure.
This suggests that distinctness of extreme values may persist under broad classes of models exhibiting weak dependence.
\end{abstract}

\noindent%

\medskip
\noindent

\noindent{\it Keywords:}  Cramér transform; complete graph; concentration function; large deviations; negative dependence; order statistics.

\noindent%
{\it MSC2020:  05C20, 60C05, 60F10, 62G30}

\spacingset{1.45} 
\section{Introduction and Problem Statement}
In a round-robin tournament, each of $n$ players competes against each of the other $n-1$ players. When player
$i$ plays against player $j$, player $i$'s reward is a random variable $X_{ij}$. Let
\begin{equation}
\label{eq:not}
s_i(n)=\sum_{j=1,\, j\neq i}^{n} X_{ij}
\end{equation}
denote the score of player $i$, $1\le i\le n$, after playing against all other $n-1$ players.
We refer to $\bigl(s_1(n),s_2(n),\ldots,s_n(n)\bigr)$ as the score sequence of the tournament.

{Model $M_{[0,\,1]}$}: Let $D \subset [0, 1]$ be a countable set of all possible values of $X_{ij}$.
Every element of $D$ has positive probability.
For $i\neq j$, $X_{ij}+X_{ji}=1$, $X_{ij}, X_{ji} \in D$. This together with the assumption that all players are
equally strong (i.e., $X_{ij}$ are identically distributed), implies
\begin{equation}
\label{eq:symD}
0<\mathbb{P}\left(X_{ij}=a\right)=\mathbb{P}\left(X_{ij}=1-a\right)\,\,\,\, \text{for any}\,\,\,\, a\in D.
\end{equation}
We assume that $|D|>1$. For each unordered pair $\{i,j\}$,
the variables $X_{ij}$ and $X_{ji}$ are dependent through the relation
$X_{ij}+X_{ji}=1$. Apart from this dependence within a match, outcomes
of different matches are independent; that is, the pairs
$\bigl(X_{ij},X_{ji}\bigr)$, $1\le i<j\le n$, are mutually independent.

The classical round-robin tournament corresponds to
$D=\{0,1\},$
which we denote by Model $M_1$. The classical chess model corresponds to $
D=\left\{0,\frac12,1\right\}$ (Model $M_2$). More generally, we consider finite grid models $M_k=\left\{0, \frac{1}{k}, \frac{2}{k},\ldots, \frac{k}{k}\right\}$. 

The problem of measuring players' strengths in chess tournaments via paired comparisons has a long history, beginning with \cite{L1895}. In \cite{Z1929}, a model for winning probabilities was proposed, and maximum likelihood estimators were derived and applied to the analysis of a chess tournament. The combinatorial investigation of Models $M_1$ and $M_2$ was initiated in \cite{M1923, M1923b} and continues to attract attention; see, for example, \cite{ABKKW2006, H2026, BDK2026}.  
An introduction to the combinatorial and probabilistic aspects of tournaments, along with extensive references to earlier works, is provided in the classical monograph \cite{Moon1968}; see also the corrected 2013 version available via \href{https://www.gutenberg.org/ebooks/42833}{Project Gutenberg}.

Round-robin tournaments provide a natural probabilistic framework for paired-comparison models and related statistical inference, beginning with the works of \citep{Z1929, KB1940}, discussed extensively in the monograph \cite{D1988}, and continuing to attract considerable attention to this day; see, for example, \cite{BF2018, N2023}.
The asymptotic distribution of extreme scores in Model $M_{[0,1]}$ was recently derived in \cite{M2026}, where further connections and an extensive bibliography are provided.
A related problem was considered recently in \cite{DR2026}.

Let $r_n(M)$ denote the probability that the tournament with $n$ players has a unique player with maximum score under model $M$.
\cite{E1967} stated, without proof, that in a classical round-robin tournament (Model $M_1$), the probability that there is a unique
player with the maximum score tends to $1$ as $n$ tends to infinity, i.e.,
\begin{equation}
\label{eq:C1}
{\displaystyle \lim_{n\rightarrow \infty}r_n(M_1)=1.}
\end{equation}
In a survey paper, \cite{G1984} noted that Epstein's claim \eqref{eq:C1} was still open at the time.
In a recent publication, \cite{MM2024} proved \eqref{eq:C1} using a method introduced in \cite{EW1977}.
More recently, \cite{AM2026} extended this result to the model $M_{[0,1]}$.

We denote the vector of ordered scores by
$$
s_{(1)}(n)\le s_{(2)}(n)\le\cdots\le s_{(n)}(n).
$$

Define the event
\begin{equation}
\label{eq:U}
U_{n,k}
=
\Bigl\{
\text{the $k$ largest scores } s_{(n-k+1)}(n), s_{(n-k+2)}(n), \ldots, s_{(n)}(n)
\text{ are pairwise distinct}
\Bigr\}.
\end{equation}

In this work, we obtain conditions on $k(n)$ under which
\begin{equation*}
\lim_{n\to\infty}\mathbb{P}\!\left(U_{n,k(n)}\right)=1.
\end{equation*}

The tournament can be represented by a complete directed graph whose vertices correspond to the players. For each pair of distinct vertices $i$ and $j$, the directed edges $(i,j)$ and $(j,i)$ are assigned weights $X_{ij}$ and $X_{ji}$ satisfying
$
X_{ij}+X_{ji}=1.
$
Thus, each match distributes one unit of score between the two players.
It is important to note that related questions have also been studied in the theory of random graphs. 
In the Erd\H{o}s--R\'enyi random graph model \citep{G1959, ER1959, ER1960, B2001}, there are $n$ vertices, and each unordered pair of distinct vertices $i$ and $j$ is connected by an undirected edge independently with probability $p$ (which may depend on $n$).
A related problem concerning the distinctness of extreme vertex degrees was posed in \cite{ER1968} and studied by \cite{B1981}. An interesting application of the obtained results on the gaps between extreme degrees was found in the construction of a simple algorithm for the graph isomorphism problem \cite{BES1980,B2001}.

This direction remains an active area of research to this day; see, for example, \cite{JKLP1993} and \cite{IZ2025}.
Although the dependence structure in random tournaments is fundamentally different and exhibits negative dependence, the behavior of the extreme scores turns out to be strikingly similar to that in random graphs. 
In contrast to the random graph setting, our proof relies on different methods, combining large deviations, concentration inequalities, and the negative dependence structure of tournament scores, particularly negative association.

\section{Main Result}
\begin{thm}
\label{thm1}
If $k(n)\to\infty$ as $n\to\infty$ and
$$
\frac{k(n)^2\sqrt{\log(n/k(n))}}{\sqrt n}\to 0,
$$
then
\begin{equation*}
\lim_{n\to\infty}\mathbb{P}\bigl(U_{n,k(n)}\bigr)=1.
\end{equation*}
\end{thm}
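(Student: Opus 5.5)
We follow the classical Bollobás strategy for distinct extreme degrees, with large deviations replacing binomial estimates and negative dependence replacing independence. Let $\mu=\Ebb X_{12}=1/2$ and $\sigma^2=\mathrm{Var}(X_{12})>0$; the latter is positive because $|D|>1$. Each $s_i(n)$ is a sum of $n-1$ i.i.d.\ copies of $X_{12}$. We choose a threshold $t=t_n$ so that the expected number of players with $s_i(n)\ge t$ is of order $k(n)$, say $n\,\Pbb(s_1(n)\ge t)=Ck(n)$ with $C$ a large constant. Then $t-(n-1)/2\approx \sigma\sqrt{2n\log(n/k)}$. The event $U_{n,k}$ fails only if either (a) fewer than $k$ players have score at least $t$, or (b) two distinct players $i\ne j$ with $s_i(n)=s_j(n)\ge t$ exist. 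For (a), let $N=\sum_i \1\{s_i(n)\ge t\}$. The scores of a tournament are negatively associated, since each match contributes $(X_{ij},1-X_{ij})$, a negatively associated pair, and negative association is preserved under monotone functions of disjoint blocks. Hence the indicators $\1\{s_i\ge t\}$ are negatively associated, so $\mathrm{Var}(N)\le \Ebb N$, and Chebyshev gives $\Pbb(N<k)\le \Ebb N/(\Ebb N-k)^2\to0$ as $C$ grows and $k\to\infty$. Alternatively one may use a Chernoff bound valid under negative association.

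Part (b) carries the main difficulty. By a union bound it suffices to show that
$$
n^2\,\Pbb\bigl(s_1(n)=s_2(n)\ge t\bigr)\to 0 .
$$
We condition on the common match $X_{12}=a$. This gives $s_1=a+S$ and $s_2=1-a+S'$, where $S,S'$ are independent sums of $n-2$ i.i.d.\ copies of $X_{12}$. Hence
$$
\Pbb(s_1=s_2\ge t)\le \sup_a\sum_{x\ge t}\Pbb(S=x-a)\,\Pbb(S'=x-1+a)\le \Pbb(S\ge t-1)\cdot \sup_{x\ge t-1}\Pbb(S'=x \mid S'\ge t-1).
$$
The key estimate is a local bound in the tail. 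We need the concentration function of $S'$, conditioned on $S'$ lying in the upper tail region near $t$, to be of order $1/\sqrt{n\log(n/k)}$ times $\sqrt{n}$, that is, the tail scale. We obtain this through the Cramér transform. Tilt the law of $X_{12}$ by $e^{hx}$, where $h\approx \sqrt{2\log(n/k)/n}/\sigma$ is chosen so that the tilted mean equals $t/n$. Under the tilt, the Kolmogorov–Rogozin concentration inequality gives $\sup_x\Pbb_h(S'=x)\le c/\sqrt n$, and this holds for arbitrary countable $D$ because it needs only nondegeneracy. Undoing the tilt, we get, for $x\ge t$,
$$
\Pbb(S'=x)\le \frac{c}{\sqrt n}\, e^{-nI(x/n)},
$$
while $\Pbb(S'\ge t)\ge \frac{c'}{h\sqrt n}e^{-nI(t/n)}$ by a standard lower bound for the tilted sum. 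Together these give $\sup_{x\ge t}\Pbb(S'=x)\le C'h\,\Pbb(S'\ge t)$.

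Combining these estimates,
$$
n^2\Pbb(s_1=s_2\ge t)\lesssim n^2\, h\,\Pbb(s_1\ge t)^2\approx h\,(Ck)^2\asymp \frac{k^2\sqrt{\log(n/k)}}{\sqrt n},
$$
which tends to $0$ by hypothesis. The shift by $\pm1$ in the threshold and the conditioning on $X_{12}$ change the bounds only by constant factors. These factors are controlled because $h=o(1)$, so $e^{h}\to1$.

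The step we expect to be hardest is proving the tilted local bound uniformly. The lattice structure of $D$ may be arbitrary or even non-lattice, but it is countable. For this reason we must use the concentration function instead of a local CLT. We also need the lower tail estimate $\Pbb(S\ge t)\gtrsim e^{-nI}/(h\sqrt n)$, which follows from Berry–Esseen applied under the tilted measure. This is where we expect the $\sqrt{\log(n/k)}$ factor to arise.
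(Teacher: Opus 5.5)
Your proposal is correct and follows essentially the paper's route: a threshold with expected exceedance count $\asymp k(n)$, negative association plus Chebyshev for the count, a union bound over pairs after conditioning on their common match, and exponential tilting combined with the Kolmogorov--Rogozin concentration bound. The only variation is that you convert the local bound to the $k$-scale through the ratio estimate $\sup_{x\ge t-1}\Pbb(S'=x)\le C'h\,\Pbb(S'\ge t-1)$, obtained from a tilted Berry--Esseen lower bound, whereas the paper uses Feller's moderate-deviation theorem and the calibration $e^{-x_{n,k}^2/2}\asymp k\,x_{n,k}/n$.

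Two slips need fixing, although your final combination already uses the correct forms. First, after conditioning on $X_{12}=a$ the bound should read $\Pbb(S\ge t-1)\,\sup_{x\ge t-1}\Pbb(S'=x)$, not the conditional $\Pbb(S'=x\mid S'\ge t-1)$; the conditional version drops a factor $\Pbb(S'\ge t-1)$ and would only give $n^2\cdot(k/n)\cdot h\to\infty$. Second, with a single tilt $h$ the pointwise bound is $\frac{c}{\sqrt n}e^{-nI(t/n)}$ for all $x\ge t$, not $e^{-nI(x/n)}$.
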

\medskip

\begin{cor}
If
\[
k(n)=o\!\left(\frac{n^{1/4}}
{(\log n)^{1/4}}\right),
\]
then the condition of Theorem~\ref{thm1} holds.
\end{cor}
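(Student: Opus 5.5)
The corollary only asks us to check that its growth condition on $k(n)$ implies the hypothesis of Theorem~\ref{thm1}. So we verify
\[
\frac{k(n)^2\sqrt{\log(n/k(n))}}{\sqrt n}\to 0 .
\]
(The requirement $k(n)\to\infty$ in Theorem~\ref{thm1} is a separate assumption on the sequence and is understood to be carried along.)

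\emph{Step 1: rewrite the assumption.} Write $k=k(n)$ and set $\varepsilon_n = k(n)\,(\log n)^{1/4}/n^{1/4}$. By assumption $\varepsilon_n\to 0$. Squaring gives
\[
k^2 = \varepsilon_n^2\,\frac{\sqrt n}{\sqrt{\log n}} .
\]

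\emph{Step 2: bound the logarithm.} We may take $k\ge 1$ and $n\ge 2$, which gives $0\le \log(n/k)\le \log n$. Hence $\sqrt{\log(n/k)}\le \sqrt{\log n}$.

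\emph{Step 3: combine.} Inserting both facts into the target quantity,
\[
\frac{k^2\sqrt{\log(n/k)}}{\sqrt n}\le \varepsilon_n^2\,\frac{\sqrt n}{\sqrt{\log n}}\cdot\frac{\sqrt{\log n}}{\sqrt n}=\varepsilon_n^2\to 0 .
\]
This is exactly the hypothesis of Theorem~\ref{thm1}, so $\mathbb{P}\bigl(U_{n,k(n)}\bigr)\to 1$.

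The argument is elementary. The only point needing care is the sign of $\log(n/k)$: since $k\le n$, the logarithm is nonnegative and its square root is well defined. That is all the verification requires.
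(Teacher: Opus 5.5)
Your proposal is correct and matches the paper: the paper states the corollary without proof, and the intended check is exactly your computation, using $\log(n/k)\le\log n$ to get $k^2\sqrt{\log(n/k)}/\sqrt n\le\bigl(k(\log n)^{1/4}/n^{1/4}\bigr)^2\to 0$. You are also right that the $o(\cdot)$ condition does not imply $k(n)\to\infty$, so that hypothesis of Theorem~\ref{thm1} has to be assumed separately.
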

\medskip

A related property concerning the set of the smallest scores can be obtained due to the symmetry of the outcome distribution assumed in \eqref{eq:symD}.
Recall the definition $U_{n,k}$ in \eqref{eq:U} and define
$$
\widetilde U_{n,k}
=\Bigl\{
\text{the $k$ smallest scores } s_{(1)}(n), s_{(2)}(n), \ldots, s_{(k)}(n)
\text{ are pairwise distinct}
\Bigr\}.
$$

\begin{pro}
\label{cor:sym}
For each $k\in \left\{1, 2, \ldots, n\right\}$,
$$
\Pbb\big(U_{n,k}\big)
=
\Pbb\big(\widetilde U_{n,k}\big).
$$
\end{pro}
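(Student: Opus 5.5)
The plan is to build an explicit measure-preserving involution on the outcome space that turns the score sequence into its reflection about the mean score, $(n-1)/2$. Define the complement map $\phi$ on the family $\{(X_{ij},X_{ji})\}_{1\le i<j\le n}$ by
$$
(X_{ij},X_{ji})\mapsto (X'_{ij},X'_{ji})=(1-X_{ij},\,1-X_{ji})=(X_{ji},X_{ij}),
$$
so each match result is swapped between its two players. The constraint $X'_{ij}+X'_{ji}=1$ still holds, and $X'_{ij}\in D$ whenever $X_{ij}\in D$, because \eqref{eq:symD} gives $a\in D\Rightarrow 1-a\in D$. Hence $\phi$ maps the support of the model into itself.

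First I will check that $\phi$ preserves the joint law. By \eqref{eq:symD}, for each pair we have $\Pbb(X_{ij}=a)=\Pbb(X_{ij}=1-a)$. Since $X_{ji}=1-X_{ij}$ is determined by $X_{ij}$, this says the law of the pair $(X_{ij},X_{ji})$ equals the law of $(X_{ji},X_{ij})$. The pairs are mutually independent, and $\phi$ acts coordinatewise on them. Therefore the image family $\{(X'_{ij},X'_{ji})\}$ is again a family of independent pairs with the same marginal laws, and so it has the same joint distribution as the original.

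Next I will compute the effect on scores:
$$
s'_i=\sum_{j\ne i}(1-X_{ij})=(n-1)-s_i .
$$
The map $x\mapsto (n-1)-x$ is strictly decreasing, so it reverses order. Consequently $s'_{(m)}=(n-1)-s_{(n+1-m)}$ for every $m$. The $k$ largest scores of the image tournament are thus $(n-1)$ minus the $k$ smallest scores of the original. A bijection preserves pairwise distinctness, so $\{\text{$k$ largest of } s' \text{ distinct}\}=\widetilde U_{n,k}$, evaluated on the original outcome. In other words, $\phi^{-1}(U_{n,k})=\widetilde U_{n,k}$.

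Since $\phi$ preserves the law, $\Pbb(U_{n,k})=\Pbb(\phi^{-1}(U_{n,k}))=\Pbb(\widetilde U_{n,k})$, which is the claim. The only point needing care is the tie convention in the order statistics. Distinctness of a multiset of values does not depend on how ties are labelled, so order reversal causes no ambiguity. I expect no real obstacle; the key step is recognising that \eqref{eq:symD} is exactly the statement that swapping the two entries of each match is law-preserving.
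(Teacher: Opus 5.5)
Your proposal is correct and follows essentially the same route as the paper's proof. Both reverse every match, use \eqref{eq:symD} and the independence of matches to see that the reversed tournament has the same law, and observe that $s_i\mapsto (n-1)-s_i$ exchanges the top $k$ and bottom $k$ scores while preserving distinctness. You additionally spell out the law-preservation step, via the involution $\phi$ and $\Pbb(\phi^{-1}(U_{n,k}))=\Pbb(U_{n,k})$, which the paper asserts in one line.
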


\begin{proof}(Proposition \ref{cor:sym} )
Let $\widetilde T_n$ be the tournament obtained from $T_n$ by reversing the orientation of every match. Then $\widetilde T_n$ has the same distribution as $T_n$, and its scores satisfy
$$
\widetilde s_i=(n-1)-s_i, \qquad 1\le i\le n.
$$

Hence, the event that the top $k$ scores are pairwise distinct in $T_n$ coincides with the event that the bottom $k$ scores are pairwise distinct in $\widetilde T_n$. Since $\widetilde T_n \stackrel{d}{=} T_n$, the conclusion follows.
\end{proof}

\begin{proof}(Theorem \ref{thm1})

To prove the theorem, we require the following notation and three lemmas.
\medskip

From the symmetry $X_{ij} \stackrel{d}{=} 1-X_{ij}$  it follows that
$$\mu=\mathbb{E}(X_{ij})=\frac{1}{2}.$$
Since $X_{ij}\in[0,1]$, we also have $$\sigma=\bigl(\mathrm{Var}(X_{ij})\bigr)^{1/2}\le \frac{1}{2}.$$
Furthermore, let
\begin{align}
\label{eq:tn}
t_{n, k}=(n-1)\mu+x_{n, k} (n-1)^{1/2}\sigma,
\end{align}
where $x_{n,k}$ will later be chosen so that the expected number of scores exceeding $t_{n,k}$ is of order $k$.

Let
$$
I_j(t)=\1\{s_j(n)>t\}, \qquad Z_t=\sum_{j=1}^n I_j(t).
$$

Define
$$
W_n(t)=\sum_{1\le v<u\le n}\1\{t<s_u(n)=s_v(n)\}.
$$
\medskip

\begin{lem}
\label{pro:1}
Let $k(n)\to\infty$ with $k(n)=o(n)$. Then, for any fixed $\delta\in(0,1)$, one can choose $x_{n,k}\to\infty$ satisfying 
$
x_{n,k}=o(n^{1/6})$ and
\begin{equation}
\label{eq:Cond}
x_{n,k}^2=2\log\!\bigl(n/k(n)\bigr)+O\!\bigl(\log\log(n/k(n))\bigr)
\end{equation}
such that
$$
n\,\mathbb{P}\!\left(s_1(n)>t_{n,k}\right)\sim (1+\delta)\,k(n).
$$
\end{lem}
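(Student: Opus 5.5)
The idea is to use that $s_1(n)$ is a sum of $n-1$ i.i.d.\ bounded variables $X_{1j}$, $j\neq 1$, with mean $\mu=1/2$ and variance $\sigma^2>0$. Independence holds because the pairs $(X_{1j},X_{j1})$ are independent across $j$. Positivity of the variance follows from $|D|>1$ together with \eqref{eq:symD}. We then combine a moderate-deviation (Cramér) asymptotic for $\Pbb(s_1(n)>t)$ with a continuity/intermediate-value argument in $x$. Write $\bar\Phi(x)=1-\Phi(x)$. Cramér's moderate deviation theorem (which applies since the $X_{1j}$ are bounded and so have a finite moment generating function) gives, uniformly in $0\le x=o(n^{1/6})$,
\[
\Pbb\bigl(s_1(n)>(n-1)\mu+x(n-1)^{1/2}\sigma\bigr)=\bar\Phi(x)\bigl(1+o(1)\bigr).
\]
In this range the Cramér series correction $x^3/\sqrt n$ is $o(1)$. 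Since $D$ may be countable with atoms (even lattice, e.g.\ $M_1$), the left side is not continuous in $x$, so it is cleaner to work with $\bar\Phi$ as the target and absorb the error into the $\sim$.

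Define $y_n$ by $n\bar\Phi(y_n)=(1+\delta)k(n)$. A solution exists and is unique because $\bar\Phi$ is continuous and strictly decreasing, and $(1+\delta)k/n\to0$, so it lies in $(0,1/2)$ for large $n$. Using the Mills ratio $\bar\Phi(y)\sim \phi(y)/y$, we get
\[
\frac{y_n^2}{2}+\log y_n+\tfrac12\log(2\pi)=\log\frac{n}{(1+\delta)k}+o(1).
\]
Hence $y_n\to\infty$ and $y_n^2=2\log(n/k)-\log\log(n/k)+O(1)$, which is \eqref{eq:Cond}. Moreover, $y_n^2=O(\log n)$, so $y_n=o(n^{1/6})$. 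Set $x_{n,k}=y_n$. By the uniform moderate-deviation statement,
\[
n\Pbb(s_1(n)>t_{n,k})=n\bar\Phi(x_{n,k})(1+o(1))\sim(1+\delta)k(n).
\]
This is exactly the claim; the $o(1)$ here is relative, which is what $\sim$ requires.

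The main obstacle is justifying the moderate-deviation asymptotic with a \emph{relative} error $1+o(1)$ for possibly lattice $X_{ij}$ and a strict inequality threshold. Cramér's theorem (Petrov's form, $P(S_n>x\sqrt n\sigma)/\bar\Phi(x)=\exp(x^3\lambda(x/\sqrt n)/\sqrt n)(1+O((1+x)/\sqrt n))$) holds under Cramér's condition without any non-lattice assumption. The reason is that a lattice atom at the threshold has probability $O(e^{-x^2/2}/\sqrt n)$, which is $o(\bar\Phi(x))$ precisely when $x=o(\sqrt n)$. I would cite this directly and check that $x=o(n^{1/6})$ makes both correction factors $1+o(1)$. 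If one prefers to avoid the citation, the fallback is an exponential tilt: change measure to mean $\mu+x\sigma/\sqrt{n-1}$, then apply a local or Berry--Esseen CLT under the tilted law. The lattice case needs the local limit theorem to control the atom at $t_{n,k}$, and this is where some care is required.
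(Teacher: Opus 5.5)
Your proposal is correct and follows essentially the same route as the paper. Both arguments combine a Cram\'er-type moderate deviation estimate $\Pbb(s_1(n)>t_{n,k})\sim 1-\Phi(x_{n,k})$ for $x_{n,k}\to\infty$, $x_{n,k}=o(n^{1/6})$ (the paper cites Feller, p.~553, Theorem~3) with the Mills ratio, and then solve for $x_{n,k}$. The differences are cosmetic: you define $x_{n,k}$ by the exact equation $n\bigl(1-\Phi(x)\bigr)=(1+\delta)k$ rather than the paper's Mills-approximated equation $\frac{n}{\sqrt{2\pi}}x^{-1}e^{-x^2/2}=(1+\delta)k$, and you spell out the lattice/strict-inequality issue that the paper leaves to the citation.
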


\begin{proof}
See Appendix \ref{app:1}.
\end{proof}
\bigskip

\begin{lem}
\label{pro:2}
Suppose that $k(n)\to\infty$ with $k(n)=o(n)$, and that
$x_{n,k}$ is chosen as in Lemma~\ref{pro:1}, for some fixed
$\delta\in(0,1)$. Then, for $t_{n,k}$ defined in \eqref{eq:tn},
$$
\lim_{n \to \infty} \Pbb\!\left(Z_{t_{n,k}}<k(n)\right)=0.
$$
\end{lem}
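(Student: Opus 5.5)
By Lemma~\ref{pro:1} and exchangeability of the players,
$$
\mathbb{E}Z_{t_{n,k}}=n\,\mathbb{P}\bigl(s_1(n)>t_{n,k}\bigr)\sim(1+\delta)k(n)\to\infty .
$$
It therefore suffices to show that $Z_{t_{n,k}}$ concentrates around its mean, in the sense that $\mathrm{Var}(Z_{t_{n,k}})=o\bigl((\mathbb{E}Z_{t_{n,k}})^2\bigr)$. Chebyshev's inequality then gives
$$
\mathbb{P}\bigl(Z_{t_{n,k}}<k(n)\bigr)\le \mathbb{P}\Bigl(\bigl|Z_{t_{n,k}}-\mathbb{E}Z_{t_{n,k}}\bigr|>\tfrac{\delta}{2(1+\delta)}\,\mathbb{E}Z_{t_{n,k}}\Bigr)\le \frac{C_\delta\,\mathrm{Var}(Z_{t_{n,k}})}{(\mathbb{E}Z_{t_{n,k}})^2}
$$
for all large $n$, with $C_\delta$ depending only on $\delta$. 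This is the plan I would follow.

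To bound the variance, write $p_n=\mathbb{P}(s_1(n)>t_{n,k})$. Then
$$
\mathrm{Var}(Z_t)=\sum_j \mathrm{Var}(I_j(t))+\sum_{i\ne j}\mathrm{Cov}\bigl(I_i(t),I_j(t)\bigr)\le np_n+\sum_{i\ne j}\mathrm{Cov}\bigl(I_i(t),I_j(t)\bigr).
$$
The key step is to show that the covariances are nonpositive. The argument is the one the introduction alludes to: the tournament scores are negatively associated, and the $I_j$ are increasing functions of disjoint sets of scores. Concretely, I would organise it as follows.

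\begin{enumerate}[(i)]
\item For each unordered pair $\{i,j\}$, the vector $(X_{ij},X_{ji})=(X_{ij},1-X_{ij})$ is negatively associated. Any two coordinatewise nondecreasing functions of such a vector reduce to $f(X_{ij})$ and $g(1-X_{ij})$, which are oppositely monotone in the single variable $X_{ij}$, so their covariance is $\le 0$ by Chebyshev's association inequality.
\item The pairs are independent, so by the closure property of Joag-Dev and Proschan the whole family $\{X_{ij}\}_{i\ne j}$ is negatively associated.
\item Each score $s_i$ is a nondecreasing function of the disjoint block $\{X_{ij}\}_{j\ne i}$, and distinct scores use disjoint blocks. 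Hence $(s_1,\dots,s_n)$ is negatively associated.
\item Consequently the indicators $I_i(t)=\1\{s_i>t\}$ and $I_j(t)$, being nondecreasing functions of different coordinates, satisfy $\mathrm{Cov}(I_i(t),I_j(t))\le 0$ for $i\ne j$.
\end{enumerate}

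It follows that $\mathrm{Var}(Z_{t_{n,k}})\le np_n=\mathbb{E}Z_{t_{n,k}}$. The Chebyshev bound above is then at most
$$
\frac{C_\delta}{\mathbb{E}Z_{t_{n,k}}}\sim\frac{C_\delta}{(1+\delta)k(n)}\to 0,
$$
which proves the lemma. The only nontrivial point is verifying negative association carefully, in particular step (i) together with the closure properties in (ii) and (iii). If I wanted to avoid citing the general theory, a fallback is a direct check that $\mathbb{P}(s_1>t,s_2>t)\le \mathbb{P}(s_1>t)^2$. To do this, condition on $X_{12}$. Given $X_{12}$, the remaining parts of $s_1$ and $s_2$ are independent copies of a sum of $n-2$ outcomes. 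Writing $h(x)=\mathbb{P}(\text{that sum}>t-x)$, which is nondecreasing, we get $\mathbb{P}(s_1>t,s_2>t)=\mathbb{E}\bigl[h(X_{12})h(1-X_{12})\bigr]\le \mathbb{E}h(X_{12})\,\mathbb{E}h(1-X_{12})=\mathbb{P}(s_1>t)\mathbb{P}(s_2>t)$. The inequality is Chebyshev's inequality for the oppositely monotone functions $h(x)$ and $h(1-x)$.
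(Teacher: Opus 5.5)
Your proposal is correct and follows the paper's argument: Chebyshev's inequality with deviation $(\eta/2)\,\mathbb{E}Z_{t_{n,k}}$, where $\eta=\delta/(1+\delta)$, combined with $\mathrm{Var}(Z_{t_{n,k}})\le \mathbb{E}Z_{t_{n,k}}$ from nonpositive pairwise covariances, giving a bound of order $1/k(n)$. The only difference is that the paper cites negative association of $(I_1,\ldots,I_n)$ from earlier work, while you derive the covariance bound yourself (via the Joag-Dev--Proschan closure properties, or directly by conditioning on $X_{12}$ and applying Chebyshev's association inequality), which makes your argument self-contained.
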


\begin{proof}
See Appendix \ref{app:2}.
\end{proof}
\bigskip

\begin{lem}
\label{pro:3}
Assume Model $M_{[0,1]}$ with countable support $D\subset[0,1]$, $|D|>1$.
Let $k(n)\to\infty$, $k(n)=o(n)$, and suppose that $x_{n,k}\to\infty$ with
$x_{n,k}=o(n^{1/6})$ satisfy \eqref{eq:Cond}. Let $t_{n,k}$ be defined in \eqref{eq:tn}.
Then there exists a constant $C>0$ such that for all sufficiently large $n$,
\begin{equation*}
\Ebb\!\left(W_n(t_{n,k})\right)
\le
C\,\frac{k(n)^2\sqrt{\log(n/k(n))}}{\sqrt n}.
\end{equation*}
\end{lem}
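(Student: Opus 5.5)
By exchangeability, the expectation reduces to a single pair of players:
$$
\Ebb\bigl(W_n(t)\bigr)=\binom n2\,\Pbb\bigl(t<s_1(n)=s_2(n)\bigr), \qquad t=t_{n,k}.
$$
The plan is to bound this probability by the tail probability from Lemma~\ref{pro:1}, multiplied by a local (point) probability of order $n^{-1/2}$, times a correction factor of order $\sqrt{\log(n/k)}$ coming from tilting.

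\emph{Decoupling.} Write $s_1=X_{12}+A$ and $s_2=(1-X_{12})+B$, where $A=\sum_{j\ge3}X_{1j}$ and $B=\sum_{j\ge3}X_{2j}$. The variables $X_{12}$, $A$, $B$ are mutually independent, and $A,B$ are i.i.d.\ sums of $m=n-2$ copies of $X$. Conditioning on $X_{12}=a$, we need to bound
$$
\Pbb\bigl(A+a>t,\; B=A+2a-1\bigr)=\sum_{y>t-a}\Pbb(A=y-a)\,\Pbb(B=y+a-1).
$$
This is at most $\Pbb(A>t-1)\cdot\sup_{y>t-1}\Pbb(B=y)$. The first factor is comparable to $\Pbb(s_1>t)\sim(1+\delta)k/n$. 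Indeed, shifting by one unit changes $x_{n,k}$ by $O(n^{-1/2})$, and since $x_{n,k}=o(n^{1/6})$ the Cramér-type moderate deviation ratio is $1+o(1)$ (at worst a factor $e^{O(x/\sqrt n)}=O(1)$). This yields a bound of order $n^2\cdot(k/n)\cdot\sup_{y>t-1}\Pbb(B=y)$. The crude estimate $\sup_y\Pbb(B=y)=O(n^{-1/2})$, from the concentration function bound (Kolmogorov--Rogozin, valid since $X$ is nondegenerate), gives only $k\sqrt n$, which is far too weak. So the point probability must be taken jointly with the tail event.

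\emph{The key step is a tilted local bound.} I will instead show that for $y$ in the deviation region $y\ge t-1$,
$$
\Pbb(B=y)\le C\,\frac{x_{n,k}}{\sqrt n}\,\Pbb(B\ge y).
$$
Combined with the factor $\Pbb(A>t-1)\lesssim k/n$ and $\Pbb(B\ge y)\le\Pbb(B\ge t-1)\lesssim k/n$, this gives
$$
\Ebb W_n\lesssim n^2\cdot\frac kn\cdot\frac kn\cdot\frac{x_{n,k}}{\sqrt n}=\frac{k^2x_{n,k}}{\sqrt n},
$$
and since $x_{n,k}\asymp\sqrt{\log(n/k)}$ by \eqref{eq:Cond}, this is the claimed bound. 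To prove the tilted bound, apply the Cramér transform (exponential tilting) with parameter $h$ chosen so that the tilted mean of $B$ equals $y$; this gives $h\asymp x/\sqrt n$ because $x=o(n^{1/6})$. Then
$$
\Pbb(B=y)=e^{-hy}M(h)^m\,\widetilde\Pbb(B=y)\le e^{-hy}M(h)^m\cdot\frac{C}{\sqrt n},
$$
by the concentration function bound applied to the tilted sum, whose summands remain nondegenerate with variance bounded below uniformly for small $h$. Meanwhile, the standard lower bound for the tail gives
$$
\Pbb(B\ge y)\ge c\,e^{-hy}M(h)^m\cdot\frac{1}{h\sqrt n}
$$
(the Bahadur--Rao / Petrov moderate deviation asymptotics, $\Pbb(B\ge y)\sim\bar\Phi(x)$ with $\bar\Phi(x)\asymp e^{-x^2/2}/x$). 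Taking the ratio gives $\Pbb(B=y)/\Pbb(B\ge y)\le C'h=O(x/\sqrt n)$.

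\emph{Main obstacle.} The delicate part is making this uniform over all $y\ge t-1$ and over arbitrary countable lattice or nonlattice $D$. Since the supremum need not be attained near $t$, I would split the range into $y\le t+\sqrt n$ and larger $y$. On the first range, the tilting parameter stays of order $x/\sqrt n$. On the second range, the crude bound $\Pbb(B=y)\le\Pbb(B\ge y)$, together with Hoeffding's inequality $\Pbb(B\ge t+\sqrt n)\le e^{-(x+c)^2/2}$, gives a contribution that is negligibly small relative to $k/n\cdot x/\sqrt n$. The concentration-function step requires no lattice assumption, which is why Kolmogorov--Rogozin is used rather than a local limit theorem; the uniform lower bound on the tilted variance follows from $|D|>1$.
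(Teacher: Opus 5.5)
Your reduction to $\binom n2\,\Pbb(A>t-1)\,\sup_{y>t-1}\Pbb(B=y)$ and your bound on the tail factor match the paper. So do your tools for the point probability: exponential tilting followed by the Kolmogorov--Rogozin concentration bound for the tilted sum. On the near range $t-1\le y\le t+\sqrt n$ your ratio argument also works. The gap is in making the point-probability bound uniform over all $y>t-1$.

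Because your tilt depends on $y$ and you compare with $\Pbb(B\ge y)$, you have to split the range. Your crude bound on $y>t+\sqrt n$ is then much too weak. You need $\sup_y\Pbb(B=y)\lesssim \frac kn\cdot\frac{x}{\sqrt n}\asymp e^{-x^2/2}/\sqrt n$ (by \eqref{eq:sat}). But by the same moderate-deviation asymptotics, $\Pbb(B\ge t+\sqrt n)\asymp 1-\Phi(x+1/\sigma)\asymp \frac kn\,e^{-x/\sigma}$. Since $x=O(\sqrt{\log n})$, the factor $e^{-x/\sigma}$ is only $e^{-O(\sqrt{\log n})}$, not $O(x/\sqrt n)$. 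You therefore lose a factor of order $\sqrt n\,e^{-O(\sqrt{\log n})}$. The resulting bound $k^2e^{-x/\sigma}$ on this range's contribution to $\Ebb W_n$ tends to infinity for, e.g., $k=n^{1/5}$, which Theorem~\ref{thm1} allows. Separately, Hoeffding's inequality gives only $\exp\{-2\sigma^2(x+1/\sigma)^2\}$. Its leading exponent $2\sigma^2x^2$ falls short of $x^2/2$ whenever $\sigma<1/2$ (e.g.\ the chess model), so it does not deliver $e^{-(x+c)^2/2}$ either.

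The missing observation is that the tilt should not depend on $y$. Fix $h$ at the threshold, e.g.\ $h=y_{n,k}/(\sigma\sqrt m)$ as in the paper. Then for every $y>t-1$,
\[
\Pbb(B=y)=e^{-hy}M(h)^m\,\widetilde{\Pbb}_h(B=y)\le e^{-h(t-1)}M(h)^m\cdot\frac{C}{\sqrt n}.
\]
This holds because $e^{-hy}$ is decreasing in $y$, and the Kolmogorov--Rogozin bound for the single fixed tilted law is uniform in $y$. Since $e^{-h(t-1)}M(h)^m=e^{-x^2/2+o(1)}$ when $x=o(n^{1/6})$, you get $\sup_{y>t-1}\Pbb(B=y)\lesssim e^{-x^2/2}/\sqrt n\asymp xk/n^{3/2}$ in one step. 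There is no splitting and no Bahadur--Rao lower bound over a range of $y$; this is exactly the paper's Claim~\ref{cl}.

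In your formulation, the statement you actually need is the uniform one, $\Pbb(B=y)\le C'h\,\Pbb(B\ge t-1)$ for all $y\ge t-1$. The $y$-dependent $\Pbb(B=y)\le C x n^{-1/2}\,\Pbb(B\ge y)$ is not what is required. Alternatively, moving your split out to $t+C\sqrt{n\log n}$ with $C$ large enough would rescue the far range, but the fixed tilt is simpler.
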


\begin{proof}
See Appendix \ref{app:3}.
\end{proof}
\bigskip

Therefore, noting that $W_n(t_{n,k})\ge \1\{W_n(t_{n,k})\ge 1\}\ge 0$, we obtain from Lemma \ref{pro:3} that
if $k(n)\rightarrow \infty$, as $n\rightarrow \infty$, so that $\frac{k(n)^2\sqrt{\log(n/k(n))}}{\sqrt n}\rightarrow 0$, then
\begin{equation}
\label{eq:pronew}
\lim_{n \to \infty} \mathbb{P}\bigl(W_n(t_{n, k}) = 0\bigr) = 1.
\end{equation}

Define
\begin{equation}
\label{eq:int}
G=\{Z_{t_{n, k}}\ge k\}\cap\{W_n(t_{n, k})=0\}.
\end{equation}

Since $G\subseteq U_{n,k}$, it follows from \eqref{eq:int} that
\begin{equation}
\label{eq:Ine1}
0\le \Pbb\!\left(U^{C}_{n,k}\right)
\le \Pbb\!\left(G^{C}\right)
\le \Pbb\!\left(Z_{t_{n, k}}<k\right)+\Pbb\!\left(W_n(t_{n, k})>0\right).
\end{equation}
Combining \eqref{eq:Ine1} with Lemma~\ref{pro:2} and~\eqref{eq:pronew}
completes the proof of Theorem \ref{thm1}.
\end{proof}
\smallskip

\appendix

\section{Proofs}

\subsection{Proof of Lemma \ref{pro:1}}
\label{app:1}

\begin{proof}
If $x_{n, k}\rightarrow \infty$, as $n\rightarrow \infty$, so that $x_{n, k}=o(n^{1/6})$ and $X_{ij} \in [0, 1]$, then it follows
from \cite[p. 553, Theorem 3]{F1971} that
\begin{align}
\label{eq:LD1}
&
\mathbb{P}(s_1(n)>t_{n, k})\sim 1-\Phi(x_{n, k}),
\end{align}
where $\Phi()$ denotes the standard normal distribution function.
Also, since $x_{n, k}\rightarrow \infty$ as $n\rightarrow \infty$ (see for example, \cite[p. 175, Lemma 2]{F1968}),
\begin{equation}
\label{eq:LD2}
1-\Phi(x_{n, k})\sim \frac{1}{x_{n, k}}\varphi(x_{n, k}),
\end{equation}
where $\varphi()$ is the PDF of a standard normal random variable.

From \eqref{eq:LD1} and \eqref{eq:LD2}, it follows that
if $x_{n, k}\to\infty$ as $n\to\infty$
and $x_{n, k}=o(n^{1/6})$, then
\begin{equation}
\label{eq:LD3}
\mathbb{P}(s_1(n)>t_{n, k})\sim \frac{1}{x_{n, k}}\,\varphi(x_{n, k}).
\end{equation}

Let $k=k(n)$ and $x=x_{n,k}>0$ satisfy
\begin{equation}
\label{eq:sat}
\frac{n}{\sqrt{2\pi}}\,
\frac{1}{x}\,e^{-x^2/2}
=(1+\delta)\,k .
\end{equation}
Equivalently,
$$
e^{-x^2/2}
=
\frac{(1+\delta)\,k\,x\sqrt{2\pi}}{n}.
$$
Therefore,
$$
\frac{x^2}{2}
=
\log\frac{n}{k}
-\log x
-\frac12\log(2\pi)
-\log(1+\delta),
$$
and hence
\begin{equation}
\label{eq:xx}
x^2
=
2\log(n/k)
+O(\log\log(n/k)).
\end{equation}
Assuming that $k(n)=o(n)$, it follows from \eqref{eq:xx} that  $x_{n,k}=o(n^{1/6})$ and appealing to
\eqref{eq:LD3}, \eqref{eq:sat} and \eqref{eq:xx} we complete the proof of Lemma \ref{pro:1}.
\end{proof}

\subsection{Proof of Lemma 2}
\label{app:2}
\begin{proof}
Recall that
$
Z_t=\sum_{j=1}^n \1\{s_j(n)>t\},
$
that $t_{n,k}$ is defined in \eqref{eq:tn}, and that the scores
$s_1(n),\ldots,s_n(n)$ are identically distributed.
Suppose that $k(n)\to\infty$ with $k(n)=o(n)$ and that $x_{n,k}\to\infty$ with
$x_{n,k}=o(n^{1/6})$. Assume further that $k(n)$ and $x_{n,k}$ satisfy
\eqref{eq:Cond}. Then, from Lemma~\ref{pro:1}, it follows that for any fixed
$\delta\in(0,1)$,
\begin{equation}
\label{eq:AZ}
\Ebb\!\left(Z_{{t_{n, k}}}\right)=n\Pbb\!\left(s_1(n)>t_{n, k}\right)\sim (1+\delta)\,k(n)\to\infty .
\end{equation}
Let
$$
\eta=\frac{\delta}{1+\delta}\in(0,1).
$$
Appealing to \eqref{eq:AZ}, we obtain
$$
(1-\eta)\Ebb\!\left(Z_{{t_{n, k}}}\right)
=\frac{\Ebb\!\left(Z_{{t_{n, k}}}\right)}{1+\delta}
\sim k(n),
$$
and therefore, for all sufficiently large $n$,
$$
k(n)\le (1-\eta/2)\Ebb\!\left(Z_{{t_{n, k}}}\right).
$$
Hence
\begin{align}
\label{eq:inZ}
&
0\le \Pbb\!\left(Z_{{t_{n, k}}}<k(n)\right)
\le
\Pbb\!\left(Z_{{t_{n, k}}}<(1-\eta/2)\Ebb\!\left(Z_{{t_{n, k}}}\right)\right)\\
&
=
\Pbb\!\left(Z_{{t_{n, k}}}-\Ebb\!\left(Z_{{t_{n, k}}}\right)
<-(\eta/2)\Ebb\!\left(Z_{{t_{n, k}}}\right)\right) \nonumber
\le
\Pbb\!\left(\big|Z_{{t_{n, k}}}-\Ebb\!\left(Z_{{t_{n, k}}}\right)\big|
>(\eta/2)\Ebb\!\left(Z_{{t_{n, k}}}\right)\right)\\ \nonumber
&
\le
\frac{Var(Z_{{t_{n, k}}})}
{(\eta/2)^2\big(\Ebb\!\left(Z_{{t_{n, k}}}\right)\big)^2}
\le
\frac{\Ebb\!\left(Z_{{t_{n, k}}}\right)}
{(\eta/2)^2\big(\Ebb\!\left(Z_{{t_{n, k}}}\right)\big)^2}
=
\frac{4}{\eta^2\,\Ebb\!\left(Z_{{t_{n, k}}}\right)},
\end{align}
where the first inequality follows from adding a nonnegative term,
the second from Chebyshev's inequality, and the third from the facts that
$Var(I_1({t_{n, k}}))\le \Ebb(I_1({t_{n, k}}))$ and
$$\operatorname {Cov}(I_1({t_{n, k}}),I_2({t_{n, k}}))\leq 0.$$ 
The covariance bound
$
\operatorname{Cov}(I_1(t_{n,k}),I_2(t_{n,k}))\le 0
$
holds because the vector \( (I_1(t_{n,k}),\ldots,I_n(t_{n,k})) \) is negatively associated as proved in \cite{MR2022}.
Combining \eqref{eq:AZ} with \eqref{eq:inZ} under assumptions of the lemma, we obtain
$$
\lim_{n \to \infty} \Pbb\!\left(Z_{t_{n, k}}<k(n)\right)=0.
$$
\end{proof}

\subsection{Proof of Lemma 3}
\label{app:3}
\begin{proof}
Recall that
$
W_n(t)=\sum_{1\le v<u\le n}\mathbf 1\{t<s_u(n)=s_v(n)\}.
$
Also, in view of the definition of the scores in \eqref{eq:not}, let $s_u(n-1)$ denote the total score of player $u$ in the tournament obtained after removing one opponent, that is, after $n-2$ matches.
Assume from now that $n\ge3$, and let
$$
A_n=\{x:\ t_{n,k}-1<x\le n-2 \ \text{and $x$ is an attainable value of } s_u(n-1)\}.
$$

By conditioning on the outcome of the match between players $u$ and $v$, \cite[Proposition 2]{AM2026} established the following claim (see also \cite{MM2024} for the corresponding result in model $M_1$). In the original proposition, $t_{n,k}$ was replaced by an analogous quantity that does not depend on $k$; however, this modification has no effect on the proof.
\begin{claim}
\begin{equation}
\label{eq:AM1}
\Ebb\!\left(W_n(t_{n,k})\right)
\le
\frac{n(n-1)}{2}\,\Pbb\!\left(s_1(n-1)>t_{n,k}-1\right)\,
\sup_{x\in A_n}\Pbb\!\left(s_1(n-1)=x\right).
\end{equation}
\end{claim}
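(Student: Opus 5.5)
The plan is to use linearity of expectation over pairs, condition on the single match between the two players involved, and exploit the independence of the remaining parts of their scores. By exchangeability of the players,
$$
\Ebb\!\left(W_n(t_{n,k})\right)=\binom{n}{2}\,\Pbb\!\left(t_{n,k}<s_u(n)=s_v(n)\right)
$$
for any fixed pair $u\neq v$. It therefore suffices to show that
$$
\Pbb\!\left(t_{n,k}<s_u(n)=s_v(n)\right)\le \Pbb\!\left(s_1(n-1)>t_{n,k}-1\right)\sup_{x\in A_n}\Pbb\!\left(s_1(n-1)=x\right).
$$

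Write $s_u(n)=s_u'+X_{uv}$ and $s_v(n)=s_v'+X_{vu}$, where $s_u'=\sum_{j\ne u,v}X_{uj}$ and $s_v'=\sum_{j\ne u,v}X_{vj}$. These depend on disjoint sets of matches, because the match $\{u,v\}$ is excluded and every other match involves at most one of $u,v$. Since the match pairs are mutually independent, $s_u'$, $s_v'$ and $X_{uv}$ are mutually independent, and each of $s_u',s_v'$ has the law of $s_1(n-1)$, a sum of $n-2$ i.i.d.\ copies of $X_{12}$. Condition on $X_{uv}=a$, so that $X_{vu}=1-a$, with $a\in D$ countable. Then
$$
\Pbb\!\left(t_{n,k}<s_u=s_v\mid X_{uv}=a\right)=\sum_{y}\Pbb\!\left(s_v'=y\right)\Pbb\!\left(s_u'=y+1-2a\right)\1\{y+1-a>t_{n,k}\},
$$
where the sum runs over the countably many attainable values $y$ of $s_v'$.

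On the event in question, $y=s_v-(1-a)>t_{n,k}-1$ because $1-a\le1$, and $y\le n-2$ trivially. So every contributing $y$ lies in $A_n$, and $\Pbb(s_v'=y)\le\sup_{x\in A_n}\Pbb(s_1(n-1)=x)$. Pulling this supremum out leaves
$$
\sum_y \Pbb\!\left(s_u'=y+1-2a\right)\1\{y+1-a>t_{n,k}\}\le \Pbb\!\left(s_u'>t_{n,k}-1\right).
$$
This holds because the map $y\mapsto y+1-2a$ is injective, and on the indicator $s_u'=y+1-2a=s_u-a>t_{n,k}-1$ since $a\le 1$. The resulting bound does not depend on $a$, so averaging over the law of $X_{uv}$ and multiplying by $\binom n2$ gives \eqref{eq:AM1}.

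The only delicate point is bookkeeping. One must check that the independence of $s_u',s_v'$ and $X_{uv}$ really holds under the pairwise structure, which it does since the matches are disjoint. One must also check that both shifted thresholds move in the right direction: we use $a\le1$ for $s_u'$ and $1-a\le1$ for $s_v'$. Both directions use only $D\subset[0,1]$, so the replacement of the original threshold in \cite{AM2026} by $t_{n,k}$ is immaterial.
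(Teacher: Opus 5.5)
Your proof is correct and follows the route the paper indicates via \cite[Proposition 2]{AM2026}. You condition on the outcome $X_{uv}=a$ of the match between $u$ and $v$, use the independence of the reduced scores $s_u'$ and $s_v'$ (each distributed as $s_1(n-1)$), bound one point probability by the supremum over $A_n$, and sum the other to get the tail $\Pbb\!\left(s_1(n-1)>t_{n,k}-1\right)$, using only $0\le a\le 1$; this supplies the details the paper delegates to that reference.
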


We next bound the two probability terms appearing on the right-hand side of \eqref{eq:AM1}, assuming Model $M_{[0,1]}$, $k(n)\to\infty$, $k(n)=o(n)$, and $x_{n,k}\to\infty$ with $x_{n,k}=o(n^{1/6})$, where $x_{n,k}$ satisfies \eqref{eq:Cond}.

Define
\begin{equation}
\label{eq:L}
l_{n,k}
=
t_{n,k}-1
=
(n-2)\mu+y_{n,k}(n-2)^{1/2}\sigma,
\end{equation}
where
$$
y_{n,k}
=
\frac{t_{n,k}-1-(n-2)\mu}
{\sigma (n-2)^{1/2}}.
$$
Since $\mu=1/2$ and recalling the definition of $t_{n,k}$ in \eqref{eq:tn}, we have
$$
y_{n,k}
=
x_{n,k}\sqrt{\frac{n-1}{n-2}}
-
\frac{1}{2\sigma\sqrt{n-2}}
=
x_{n,k}+O(n^{-1/2}),
$$
and therefore
\begin{equation}
\label{eq:xy}
y_{n,k}^2=x_{n,k}^2+o(1).
\end{equation}

The first bound in the RHS of \eqref{eq:AM1} follows from the proof of Lemma~\ref{pro:1} with $n$ replaced by $n-1$ and $x_{n,k}$ replaced by $y_{n,k}$ and appealing to equations \eqref{eq:L} and \eqref{eq:xy}.
Thus,
\begin{equation}
\label{eq:b1}
\Pbb\!\left(s_v(n-1)>t_{n,k}-1\right)
\le C_1 \frac{k(n)}{n}
\end{equation}
for some constant $C_1>0$.
The second bound is stated in the following claim. Although the proof follows the same general ideas as those used in \cite{AM2026}, the present setting is more general since the threshold depends on $k(n)$. For the sake of completeness, we provide the details of the proof in the Appendix \ref{app:4}.
\begin{claim}
\label{cl}
\begin{equation}
\label{eq:b2}
\sup_{x\in A_n}\Pbb(s_1(n-1)=x)
\le C_2 \frac{k(n)\sqrt{\log(n/k(n))}}{n^{3/2}}
\end{equation}
for some constant $C_2>0$.
\end{claim}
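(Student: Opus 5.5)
We will bound the point probabilities $\Pbb(s_1(n-1)=x)$ for $x\in A_n$, i.e. for $x$ in the moderate-deviation range $x> l_{n,k}=(n-2)\mu+y_{n,k}(n-2)^{1/2}\sigma$. The approach is an exponential tilt (Cramér transform) followed by a concentration-function bound for the tilted sum. The target $k\sqrt{\log(n/k)}/n^{3/2}$ factors as $(k/n)\cdot y_{n,k}\cdot n^{-1/2}$, since $y_{n,k}\asymp\sqrt{\log(n/k)}$. Heuristically this is the Gaussian density $\varphi(y)/(\sigma\sqrt n)$ times an extra factor $y$, which is exactly what a tilt produces.

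\emph{Step 1 (tilting).} Write $s_1(n-1)=\sum_{j=1}^{m}X_{1j}$ with $m=n-2$ i.i.d.\ summands $X_{1j}$, each distributed as $X$ on $D\subset[0,1]$. Let $\Lambda(h)=\log\Ebb e^{hX}$. For $x\in A_n$, set $z=x/m$ and choose the tilt $h$ with $\Lambda'(h)=z$. Such an $h$ exists, because $z-\mu\asymp y_{n,k}/\sqrt n\to0$ (values of $x$ far above the threshold are treated separately below), and it satisfies $h\asymp (z-\mu)/\sigma^2$. Under the tilted law $Q$, the summands are i.i.d.\ with mean $z$ and variance $\Lambda''(h)\to\sigma^2$. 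The standard change of measure gives
\begin{equation*}
\Pbb(s_1(n-1)=x)=e^{-m(hz-\Lambda(h))}\,Q(S_m=x)\le e^{-m\Lambda^*(z)}\,\sup_{w}Q(S_m=w).
\end{equation*}

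\emph{Step 2 (concentration under the tilt).} We need $\sup_w Q(S_m=w)\le C m^{-1/2}$ uniformly in small $h$. Since $|D|>1$, there are two values $a\ne b$ in $D$ with positive probability, and their tilted masses stay bounded below for $|h|\le1$. The Kolmogorov--Rogozin inequality then gives $\sup_w Q(S_m=w)\le C/\sqrt{m\,Q(X\ne X')}$ for an independent copy $X'$, which is $O(m^{-1/2})$. This handles countable, even non-lattice, $D$ without any local limit theorem. It is the step I would cite in the form used in \cite{AM2026}.

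\emph{Step 3 (the exponential factor).} The function $\Lambda^*$ is increasing on $z\ge\mu$, so the supremum over $x\in A_n$ is attained at $z$ near $l_{n,k}/m$. There a Taylor expansion with the bounded third derivative of $\Lambda^*$ gives
\begin{equation*}
m\Lambda^*(z)=y^2/2+O(y^3/\sqrt n),\qquad y=y_{n,k}.
\end{equation*}
Because $y=O(\sqrt{\log n})=o(n^{1/6})$, the error term is $o(1)$, so $e^{-m\Lambda^*(z)}\le C e^{-y^2/2}$. By \eqref{eq:xy} and \eqref{eq:sat}, $e^{-y^2/2}\le C' e^{-x_{n,k}^2/2}=C''(k/n)\,x_{n,k}$. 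Combining the three steps yields
\begin{equation*}
C\,(k/n)\,x_{n,k}\,n^{-1/2}\le C_2\,k\sqrt{\log(n/k)}\,n^{-3/2}.
\end{equation*}
For large $x$, where $z$ is not small, the bound $e^{-m\Lambda^*(z)}$ alone is exponentially small in $n$ and the claim is trivial. The tilt still exists there, or one can use the monotonicity above.

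\emph{Main obstacle.} The delicate point is Step 2. We need a uniform anticoncentration bound for the tilted, possibly non-lattice and countably supported, summands, with a constant independent of the tilt. Ensuring that the tilted distributions keep two atoms with masses bounded away from zero across all relevant $h$ is what makes Kolmogorov--Rogozin applicable.
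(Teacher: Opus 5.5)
Your proof is correct and is essentially the paper's argument: an exponential tilt, the Kolmogorov--Rogozin bound $\sup_w Q(S_m=w)=O(m^{-1/2})$ for the tilted sum, and the estimate $e^{-y_{n,k}^2/2+o(1)}$ for the exponential factor, converted through \eqref{eq:sat}. Using \eqref{eq:sat} rather than \eqref{eq:Cond} alone is what pins down the factor $x_{n,k}\asymp\sqrt{\log(n/k)}$. The only real difference is that the paper uses one tilt $\theta=y_{n,k}/(\sigma\sqrt{m})$ for all $x>l_{n,k}$, with $e^{-\theta x}\le e^{-\theta l_{n,k}}$, which removes your per-$x$ saddle-point tilt, the uniformity over $h\in(0,1]$, and the separate Chernoff treatment of large $x$.
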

\begin{proof}
See appendix \ref{app:4}.
\end{proof}

Substituting the bounds \eqref{eq:b1} and \eqref{eq:b2} into \eqref{eq:AM1} yields
$$
\Ebb(W_n(t_{n,k}))
\le
C\,\frac{k(n)^2\sqrt{\log(n/k(n))}}{\sqrt n}
$$
for some constant $C>0$.
\end{proof}

\subsection{Proof of Claim \ref{cl}}
\label{app:4}
\begin{proof}
Recall definition $t_{n, k}$ in \eqref{eq:tn} and that we assume $x_{n,k}=o(n^{1/6})$, where $x_{n,k}$ satisfies \eqref{eq:Cond}.

Let $m = n - 2$ and $S_m = s_1(n-1)$.

For any $\theta\in\mathbb{R}$, define the tilted random variable
\begin{equation}
\label{eq:til}
\Pr_{\theta}(S_m=x)
=\frac{\mathbb{E}\!\left(e^{\theta S_m}\mathbf{1}_{\{S_m=x\}}\right)}
{\mathbb{E}\!\left(e^{\theta S_m}\right)}.
\end{equation}

By the definition \eqref{eq:til}, we have

\begin{align}
\label{eq:b}
&
\mathbb{P}(S_m=x)=e^{-\theta x}\,\mathbb{E}\!\left(e^{\theta S_m}\mathbf{1}_{\{S_m=x\}}\right)=
e^{-\theta x}\,\mathbb{E}\!\left(e^{\theta S_m}\right)\,
   \frac{\mathbb{E}\!\left(e^{\theta S_m}\mathbf{1}_{\{S_m=x\}}\right)}
        {\mathbb{E}\!\left(e^{\theta S_m}\right)}\nonumber\\[4pt]
&=e^{-\theta x}\,\mathbb{E}\!\left(e^{\theta S_m}\right)\,
   \Pr_\theta(S_m=x).
\end{align}

For $\theta>0$ and $x>l_{n,k}$, it follows from \eqref{eq:L} and \eqref{eq:b} that
\begin{align}
\label{eq:new1}
\mathbb{P}(S_m=x)
= e^{-\theta x}\,\mathbb{E}\!\left(e^{\theta S_m}\right)\,
   \Pr_\theta(S_m=x)
\le e^{-\theta l_{n, k}}\,\mathbb{E}\!\left(e^{\theta S_m}\right)\Pr_\theta (S_m=x),
\end{align}
where the inequality follows because $e^{-\theta x}$ is non-increasing in $x$ for $\theta>0$ and $x>l_{n,k}$.

Therefore, for $\theta>0$, and for sufficiently large $n$
\begin{equation}
\label{eq:sup}
\sup_{x>t_{n, k}-1}\mathbb{P}(S_m=x)
=
\sup_{x>l_{n, k}}\mathbb{P}(S_m=x)
\le
e^{-\theta l_{n, k}}\,\mathbb{E}\!\left(e^{\theta S_m}\right)
\sup_{x>l_{n, k}}\Pr_\theta(S_m=x).
\end{equation}

Equation (18) of \cite{AM2026} implies that, under Model $M_{[0,1]}$, for all sufficiently large $n$ and choosing $\theta=\frac{y_{n,k}}{\sigma\sqrt{m}}$, and appealing to \eqref{eq:xy}, 
\begin{align}
\label{eq:first}
e^{-\theta l_{n, k}} \mathbb{E}\!\left(e^{\theta S_m}\right)
\le e^{\Big(-\frac{x_{n, k}^2}{2} + o(1)\Big)}.
\end{align}

Also using Kolmogorov's inequality on the rate of decrease of Lévy's concentration function for the sum of independent discrete random variables obtained by \cite{R1961} (see also \cite{P1975}, pages 56 and 319), 
under Model $M_{[0,1]}$,
\begin{align}
&
\label{eq:second}
\sup_{x > l_{n, k}} \Pr_\theta(S_m = x)
\le \frac{c}{\sqrt{m}},
\end{align}
for some constant $c>0$.

Combining \eqref{eq:sup}, \eqref{eq:first}, and \eqref{eq:second} we obtain for sufficiently large $n$,
\begin{equation}
\label{eq:AM2}
\sup_{x \in A_n} \mathbb{P}(s_1(n-1) = x)\leq O\left(\frac{1}{n^{1/2}}e^{\Big(-\frac{x_{n, k}^2}{2} + o(1)\Big)}\right).
\end{equation}

Now, \eqref{eq:AM2} and \eqref{eq:Cond} imply
$$
\sup_{x\in A_n}\Pbb(s_1(n-1)=x)
\le C_2 \frac{k(n)\sqrt{\log(n/k(n))}}{n^{3/2}},
$$
for some constant $C_2>0$.

\end{proof}

\section*{Acknowledgements}
I would like to thank Noga Alon for valuable discussions, John W.~Moon and Miklós Simonovits for helpful comments and suggestions. I thank Boris Alemi for carrying out the computations on the department’s high-performance computer. This research was supported in part by BSF grant 2020063.



\end{document}